\documentclass[a4paper,10pt,reqno, english]{amsart}

\usepackage{amsmath,amssymb,amscd,amsthm,amsfonts}
\usepackage{graphicx,subfigure}
\usepackage{hyperref}
\usepackage{dsfont}
\usepackage[nobysame, alphabetic]{amsrefs}
\usepackage{tikz}
\usepackage[capitalise]{cleveref}
\usepackage{mathrsfs}
\usepackage{enumitem}

\newtheorem{theorem}{Theorem}
\newtheorem{lemma}{Lemma}

\newtheorem{corollary}{Corollary}

\newtheorem{definition}{Definition}

\newcommand{\ff}{\mathcal{F}}

\def\rr{\mathds{R}}

\DeclareMathOperator{\lin}{lin}

\DeclareMathOperator{\conv}{conv}
\DeclareMathOperator*{\bigast}{\mathop{\scalebox{1.5}{$\ast$}}}

\newcommand{\kk}{\mathcal{K}}
\newcommand{\LL}{\mathscr{L}}

\newcommand{\dist}{\operatorname{dist}}

\title{Helly and Radon theorems for convex intersections containing $k$-flats}

\hypersetup{
  pdftitle={},
  pdfauthor={}
}

\author[Ludwigson]{Sarah Ludwigson}\address{Connecticut College, 270 Mohegan Ave, New London, CT 06320}
\email{sludwigso@conncoll.edu}

\author[Sober\'on]{Pablo Sober\'on}\address{Baruch College \& The Graduate Center, City University of New York, One Bernard Baruch Way, New York, NY 10010, United States} 
\email{psoberon@gc.cuny.edu}

\thanks{The research of S. Ludwigson was supported by NSF grant DMS-2349366 and by Jane Street.  The research of P. Sober\'on is supported by NSF CAREER grant DMS-2237324 and a PSC-CUNY Track 1 award.}

\keywords{Helly's theorem, Radon's theorem, k-flat, Fractional, Colorful, Selection Structure}

\subjclass[2020]{Primary 52A35; Secondary 52A37}

\begin{document}

\begin{abstract}
We study versions of results in combinatorial geometry related to families of convex sets in $\mathbb{R}^d$ whose intersection contains a $k$-dimensional affine space. We prove generalizations of the colorful Radon theorem, the fractional Helly theorem, the colorful Helly theorem, and the selection-structure Helly theorem.  When $k=0$, our arguments give new proofs of the corresponding versions for points.
\end{abstract}

\maketitle

\section{Introduction}

The intersection patterns of convex sets in $\rr^d$ is a central theme in discrete geometry.  There are two major families of results that describe how convex sets intersect.  One family, Helly-type results, describes how intersection properties of families of convex sets are consequences of conditions on subfamilies of bounded size \cites{Amenta2017, Holmsen:2017uf, Barany2022}.  The second family, Tverberg-type results, describes the intersection properties of families that are generated from sets of points (for example, the convex hulls of all subsets) \cites{Blagojevic2017, Barany2018, Barany2021}.

Recently, there has been renewed attention to quantitative results regarding the intersection properties of convex sets.  In other words, rather than proving that the intersection of some families of convex sets is not empty, we show that it is quantifiably large.  This can mean having large volume, having large diameter, containing many integer points, etc (see, e.g., \cites{Naszodi2016, Damasdi2021, Sarkar2021} and the references therein).

The goal of this manuscript is to prove new results regarding families of convex sets in $\rr^d$ whose intersection contains a $k$-dimensional affine space, which we call a $k$-flat. There is a version of Helly's theorem for this property, due to De Santis.  There are Helly-type results for containing cones, rays, or half-flats, but those have very different Helly numbers \cites{Kolodziejczyk1984, Barany2024, Ivanov2026}.

\begin{theorem}[De Santis 1957 \cite{DeSantis1957}]\label{thm:desantis-helly}
    Let $0 \le k \le d$ be integers and let $\ff$ be a finite family of closed convex sets in $\rr^d$.  If the intersection of every $d-k+1$ or fewer sets in $\ff$ contains a $k$-flat, then the intersection of $\ff$ contains a $k$-flat.
\end{theorem}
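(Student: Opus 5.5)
The plan is to split the problem into a statement about directions and a statement about points. For a nonempty closed convex set $K\subset\rr^d$ let $\lin(K)$ denote its lineality space, i.e.\ the set of vectors $v$ with $K+\rr v=K$.

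The key fact is the standard one for closed convex sets. If $K$ contains a line $x_0+\rr v$, then $y+\rr v\subset K$ for every $y\in K$. To see this, take any $y\in K$ and $t\in\rr$. The points $(1-\varepsilon)y+\varepsilon(x_0+(t/\varepsilon)v)$ lie in $K$, and as $\varepsilon\to 0^+$ they converge to $y+tv$, so $y+tv\in K$ by closedness. Consequently $K$ contains a $k$-flat with direction space $V$ if and only if $V\subseteq\lin(K)$. Moreover, if $\bigcap\mathcal{G}\neq\emptyset$ then $\lin(\bigcap\mathcal{G})=\bigcap_{F\in\mathcal{G}}\lin(F)$.

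\emph{Step 1 (directions).} Let $L=\bigcap_{F\in\ff}\lin(F)$. I will show $\dim L\ge k$. Suppose instead that $\dim L\le k-1$. Build a chain greedily: start from $\rr^d$ and repeatedly intersect with some $\lin(F)$ that strictly lowers the dimension. Each step lowers the dimension by at least one, so after at most $d-k+1$ steps the dimension is at most $k-1$. This yields a subfamily $\mathcal{G}$ with $|\mathcal{G}|\le d-k+1$ and $\dim\bigcap_{F\in\mathcal G}\lin(F)<k$. By hypothesis $\bigcap\mathcal G$ contains a $k$-flat $x+V$. By the key fact, $V\subseteq\lin(F)$ for every $F\in\mathcal G$, so $\dim\bigcap_{F\in\mathcal G}\lin(F)\ge k$, a contradiction.

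\emph{Step 2 (points).} Let $\pi:\rr^d\to\rr^d/L\cong\rr^{m}$ be the quotient map, where $m=d-\dim L\le d-k$. Since $F=F+L$ for each $F\in\ff$, we have $F=\pi^{-1}(\pi(F))$. Hence $\pi(F)$ is convex and closed, because its complement has open preimage. Any $d-k+1\ge m+1$ (or fewer) sets of $\ff$ have nonempty intersection by hypothesis. Therefore any $m+1$ of the images $\pi(F)$ intersect, and the classical Helly theorem in $\rr^m$ for the finite family $\{\pi(F)\}$ gives a common point $p$. Then $\pi^{-1}(p)=y+L$ lies in every $F$, and it contains a $k$-flat since $\dim L\ge k$. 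The degenerate cases ($k=d$, or $\ff$ of size at most $d-k+1$) are covered by the same argument.

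I expect the main obstacle to be the key fact in the first paragraph, namely that containing one line in direction $v$ forces invariance under $\rr v$. This is exactly where closedness is used: without it the theorem fails, for example for open half-planes minus lines. Everything else is linear algebra and classical Helly.
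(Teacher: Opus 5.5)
Your proof is correct, and it takes a genuinely different route from the paper's.

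The paper does not prove \cref{thm:desantis-helly} on its own. It notes that De Santis derived it from the Radon theorem for flats (\cref{thm:desantis-radon}), and its own new proof is the special case $\ff_1=\dots=\ff_{d-k+1}=\ff$ of \cref{thm:colorful-desantis}. That argument runs as follows:
\begin{itemize}
\item flats are ordered lexicographically by $r(L)=(d-\dim L,\dist(\bar{0},L))$;
\item the minimal flat $M(K)$ of a closed convex set is shown to be unique;
\item \cref{lem:minimize-small-family} shows that $M(\bigcap\ff)$ is already the minimal flat of at most $d-\dim M(\bigcap\ff)$ of the sets;
\item one picks a $(d-k)$-subfamily whose minimal flat is largest in this order, and shows every remaining set must contain it.
\end{itemize}

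You instead separate directions from positions. The invariance of a closed convex set under its lineality space, which you prove correctly, lets a pure codimension count show that the common lineality space $L$ of the whole family has dimension at least $k$. Quotienting by $L$ then turns every set into a cylinder over a closed convex set in $\rr^m$ with $m\le d-k$, where classical Helly applies. For this theorem your argument is shorter and more transparent: it uses neither the Radon theorem for flats nor the machinery of \cref{sec:order-of-k-flats}.

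What the paper's approach buys is flexibility. Your Step~2 needs one direction space under which \emph{every} set is invariant. In the colorful, fractional and selection-structure versions no such global $L$ exists, because the flat one is after, and hence its direction space, depends on the subfamily. The minimal-flat lemma is designed to let that flat vary.

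Separately, your closing example of non-closed sets is garbled as written, but nothing in the argument depends on it.
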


The case $k=0$ is Helly's theorem \cite{Helly:1923wr}.  The proof by De Santis is based on the following Radon-type theorem for $k$-flats.

\begin{theorem}[De Santis 1957 \cite{DeSantis1957}]\label{thm:desantis-radon}
    Let $k \le d$ be non-negative integers.  Given $d-k+2$ affine flats of dimension $k$ in $\rr^d$, there exists a partition of them into two sets $A,B$ such that $\conv(\cup A) \cap \conv (\cup B)$ contains a $k$-flat.
\end{theorem}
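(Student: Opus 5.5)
The plan is to argue by induction on $k$ with $d-k$ fixed. The base case $k=0$ is the classical Radon theorem: $d+2$ points in $\rr^d$ admit a partition into two parts whose convex hulls intersect. For the inductive step, fix $k\ge 1$ and $k$-flats $L_1,\dots,L_{d-k+2}$ in $\rr^d$. Choose a direction $u$ so that every $L_i$ is transversal to the hyperplanes $H_t=\{x:\langle x,u\rangle=t\}$. Then $L_i\cap H_t$ is a $(k-1)$-flat in $H_t\cong\rr^{d-1}$ for every $t$. Since $(d-1)-(k-1)+2=d-k+2$, the number of flats is exactly right for the induction hypothesis in $H_t$.

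Applying the induction hypothesis in $H_t$ gives, for each $t$, a partition $A_t\cup B_t$ and a $(k-1)$-flat $F_t\subset H_t$ with
\[
F_t\subset \conv\Bigl(\bigcup_{i\in A_t}(L_i\cap H_t)\Bigr)\cap\conv\Bigl(\bigcup_{i\in B_t}(L_i\cap H_t)\Bigr)\subset C_{A_t,B_t},
\]
where $C_{A,B}=\conv(\cup_{i\in A}L_i)\cap\conv(\cup_{i\in B}L_i)$. Only finitely many partitions exist. Hence some fixed partition $(A,B)$ occurs for a set $T$ of parameters that is unbounded above; by symmetry we may also assume it is unbounded below, or else treat a one-sided limit. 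The set $C=C_{A,B}$ is convex. It contains the $(k-1)$-flats $F_t$ for all $t\in T$.

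It remains to show that $C$ contains a $k$-flat. I will use the recession cone of a closed convex set, after passing to closures or working with the closed sets involved. Each $F_t$ has a direction space $W_t$, a $(k-1)$-dimensional subspace of $u^\perp$, and $W_t\subset\operatorname{rec}(C)$. Two points $x_s\in F_s$ and $x_t\in F_t$ with $s,t\in T$, $s\to-\infty$, $t\to+\infty$ give normalized segments in $C$. Their limit directions $w$ satisfy $\langle w,u\rangle\neq 0$, so $\pm w$ lie in the lineality space of $\overline C$. Combined with a limiting subspace $W$ of the $W_t$, this gives a $k$-dimensional lineality subspace $W\oplus\lin(w)$. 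Then $\overline C$ contains a $k$-flat through any of its points. To avoid closure issues, a cleaner route is to first make the argument algebraic. The Radon coefficients for the points and the direction data in $H_t$ depend affinely on $t$, because $L_i\cap H_t=p_i+tv_i+(V_i\cap u^\perp)$ for suitable $p_i,v_i$. So for all large $|t|$ one expects a fixed partition with witnesses varying affinely in $t$. The union over $t$ of the $F_t$ would then itself be a $k$-flat inside $C$.

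The main obstacle is this last step: upgrading a family of $(k-1)$-flats in parallel slices to a single $k$-flat. Two things can go wrong. The direction spaces $W_t$ might rotate with $t$. The partition or the convex-combination supports might change. I would first try to choose the witnesses canonically, via the linear-algebraic proof of Radon, i.e.\ from the kernel of a matrix whose entries are affine in $t$. The goal is to show that the sign pattern stabilizes and that $t\mapsto F_t$ sweeps out an affine $k$-flat.

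If that fails, I would fall back on the recession-cone limit argument. Its key point is that a convex set containing flats with directions converging to $W$, together with points going to infinity in both directions $\pm u$, has a lineality space containing a $k$-dimensional subspace. The closedness of the hulls would be handled by noting that $\conv$ of finitely many flats is a Minkowski-type sum and is closed after restricting to the relevant directions.
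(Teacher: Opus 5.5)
The paper gives no proof of this statement. It is quoted from De Santis and used as a black box in the proof of the colorful Radon theorem, so your proposal has to stand on its own. The slicing induction and the pigeonhole over partitions are fine. However, the step you flag as the main obstacle is a genuine gap, and both routes you sketch for it fail as stated.

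\emph{No two-sided family.} No symmetry yields a partition witnessed for $t$ unbounded in both directions. Take $d=2$, $k=1$, $L_1=\{y=0\}$, $L_2=\{y=1\}$, $L_3=\{y=x\}$ and $u=e_1$. On $H_t$ the three points have heights $0,1,t$. Three distinct collinear points have a unique Radon partition (middle point versus the other two). So for $t>1$ you are forced to use $\{L_2\}\,|\,\{L_1,L_3\}$, and for $t<0$ to use $\{L_1\}\,|\,\{L_2,L_3\}$. In general you only get a one-sided family, and recession-type arguments then give rays, not lines.

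\emph{The lineality claim fails.} Even for two-sided $T$ it is false for general convex sets. The set $\{(x,y): y\ge x^2\}$ meets every line $x=t$ yet contains no line. The chords from $(-t,t^2)$ to $(t,t^2)$ have normalized direction $e_1$ with $\langle e_1,u\rangle\neq 0$, but they escape to infinity, so nothing puts $\pm e_1$ into the lineality space.

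\emph{The affine fallback fails too.} The Radon witnesses need not be affine in $t$: a kernel vector of a matrix depending affinely on $t$ is in general only polynomial in $t$, and the partition itself changes with $t$. So $\bigcup_t F_t$ need not be a flat. Any correct argument must use the specific shape of $C_{A,B}$, which your proposal never does. These hulls are also generally not closed: for two skew lines in $\rr^3$ the hull is an open slab together with the two lines.

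\emph{The missing ingredient.} Write $L_i=p_i+V_i$ and $U_S=\sum_{i\in S}V_i$. Then
$\conv(\bigcup_{i\in A}L_i)=\bigcup_{\emptyset\ne S\subseteq A}\bigl(\operatorname{relint}\conv\{p_i:i\in S\}+U_S\bigr)$,
because in $\sum\lambda_i(p_i+v_i)$ each term $\lambda_i v_i$ with $\lambda_i>0$ ranges over all of $V_i$. Hence $C_{A,B}$ is a finite union of convex pieces $R$ (one for each pair $S\subseteq A$, $S'\subseteq B$) with $R+E_R=R\subseteq K_R+E_R$, where $E_R=U_S\cap U_{S'}$ and $K_R$ is compact.

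With this, your induction closes using only one side:
\begin{enumerate}
\item For $k\ge 2$, pick $w\in W_t$ outside the finitely many proper subspaces $E_R\cap W_t$. The line $x_t+\rr w\subseteq F_t$ meets every piece with $W_t\not\subseteq E_R$ in a bounded set. So some piece $R$ with $W_t\subseteq E_R$ meets $F_t$, and therefore contains it. For $k=1$ this is trivial, since $F_t$ is a point.
\item Pigeonhole a piece $R$ over an unbounded set of $t$. Since $R\subseteq K_R+E_R$ meets $H_t$ for unbounded $t$, this forces $E_R\not\subseteq u^\perp$.
\item Hence $\dim E_R\ge \dim W_t+1=k$, and $R\subseteq C_{A,B}$ contains a $k$-flat.
\end{enumerate}
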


Again, the case $k=0$ is Radon's theorem \cite{Radon:1921vh}, which laid the foundation for the study of Tverberg-type results.  Another way it reduces to Radon's theorem is if all the $d-k+2$ flats are parallel.  In that case, the result is equivalent to Radon's theorem in the common orthogonal complement, of dimension $d-k$.  Our first main result is a version of colorful Radon for $k$-flats.

\begin{theorem}\label{thm:kflat-colorful-Radon}
    Let $0\le k\le d$ be integers.  Given $d-k+1$ pairs of $k$-flats in $\rr^d$, there exists a partition of them into two sets $A,B$ such that each of $A,B$ has one $k$-dimensional flat from each pair, and such that $\conv (\cup A) \cap \conv (\cup B)$ contains a $k$-flat.
\end{theorem}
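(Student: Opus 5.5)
The plan is to argue by induction on $k$, mimicking the linear-algebra proof of the colorful Radon theorem for points and slicing by a parallel family of hyperplanes to lower the dimension of the flats.

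\textbf{Base case $k=0$.} We are given $d+1$ pairs $(a_i,b_i)$ of points in $\rr^d$. The $d+1$ vectors $a_i-b_i$ are linearly dependent, so $\sum_i \lambda_i(a_i-b_i)=0$ with not all $\lambda_i=0$. Put $x_i=a_i,\ y_i=b_i$ when $\lambda_i\ge 0$, and $x_i=b_i,\ y_i=a_i$ otherwise. Then $\sum|\lambda_i|x_i=\sum|\lambda_i|y_i$. Normalizing by $\sum|\lambda_i|$ exhibits a common point of $\conv A$ and $\conv B$, where $A=\{x_i\}$ and $B=\{y_i\}$. Since the argument is linear in the data, the same proof works verbatim when the points depend affinely on a parameter, provided the dependence coefficients can be chosen uniformly. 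I would keep that version in hand for the inductive step.

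\textbf{Inductive step.} Fix $d-k+1$ pairs of $k$-flats in $\rr^d$ with $k\ge 1$. Choose a linear functional $\phi$ that is nonconstant on every one of the flats; a generic choice works. For $s\in\rr$, each flat meets the hyperplane $H_s=\{\phi=s\}\cong\rr^{d-1}$ in a $(k-1)$-flat. The number of pairs is $d-k+1=(d-1)-(k-1)+1$, exactly what the induction hypothesis requires in $H_s$. Hence for every $s$ there is a colorful partition $(A_s,B_s)$ and a $(k-1)$-flat $F_s\subset H_s$ lying in $\conv(\cup A_s\cap H_s)\cap\conv(\cup B_s\cap H_s)$. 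There are only $2^{d-k+1}$ partitions, so one partition $(A,B)$ occurs for an unbounded set of parameters $s$, and in particular for infinitely many.

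\textbf{Assembling a $k$-flat.} The remaining task is to show that, for this fixed partition, the slices $F_s$ can be chosen to sweep out a single $k$-flat $F\subset\conv(\cup A)\cap\conv(\cup B)$. Since $F\cap H_s$ would then be a $(k-1)$-flat for every $s$, this would finish the proof.

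I would set this up by choosing affine parametrizations of each flat compatible with $\phi$. In coordinates adapted to a complement of $\ker\phi$, each flat $L$ is the graph of an affine map, so $L\cap H_s$ moves affinely in $s$. The witnesses produced inductively (and, at the bottom level, by the dependence in the base case) are solutions of linear systems whose data depend affinely on $s$.

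The hoped-for conclusion is that, on a suitable cell of parameters where the sign pattern of the coefficients is constant, the coefficients can be taken independent of $s$. If so, the witness flats $F_s$ depend affinely on $s$, and their union is a $k$-flat. A convex combination of points of $\conv(\cup A)$, taken with $s$-independent weights, stays in $\conv(\cup A)$, and likewise for $B$, which gives the containment.

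\textbf{Main obstacle.} I expect this assembly to be the main difficulty. The dependence coefficients may genuinely vary with $s$, and then the union of the $F_s$ is curved rather than flat. A naive count of linear conditions, requiring the relation to hold for all parameters simultaneously, has too many constraints: it asks for a vanishing affine map $\rr^k\to\rr^{d-k}$, which is $(d-k)(k+1)$ conditions.

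My first attempt at this point would be to exploit the freedom in choosing $\phi$, or to work directly with closed convex sets and their lineality spaces. The idea is that $\conv(\cup A)\cap\conv(\cup B)$ contains witness $(k-1)$-flats in $H_s$ for all large $|s|$ in one fixed partition. By convexity together with a limiting argument in the recession cone, this should force a $k$-dimensional lineality direction together with a point. De Santis's Theorem~\ref{thm:desantis-radon} would serve as a model for handling the non-closedness of convex hulls of unions of flats.
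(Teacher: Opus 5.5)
\textbf{There is a genuine gap: the proposal never produces a $k$-flat in $C$.} The steps you carry out are fine. The base case is the standard linear-algebra proof of colorful Radon, and the slicing and pigeonhole step correctly gives one colorful partition $(A,B)$ and an unbounded set $S$ of levels with $(k-1)$-flats $F_s\subseteq C\cap H_s$, where $C=\conv(\cup A)\cap\conv(\cup B)$. But the proof stops exactly where the content of the theorem lies. Freezing the coefficients in $s$ is unjustified, as you note yourself.

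Your fallback is false at the generality you state it: convexity plus a recession-cone limit cannot turn slices at unbounded levels into a $k$-flat. Take $U$ a $(k-1)$-dimensional subspace, $(x,y)$ coordinates on a complementary plane, and $\phi=x$. The convex set $U+\{(x,y): y\ge |x|\}$ contains a $(k-1)$-flat in every $H_s$. Yet its lineality space is $U$, so it contains no $k$-flat. Convexity alone does settle the case of two non-parallel slices $F_s,F_{s'}$, because then $\tfrac12F_s+\tfrac12F_{s'}\subseteq C$ is a flat of dimension at least $k$. However, for $k=1$ the slices are points, so you are always in the parallel case. Some input specific to convex hulls of flats is missing.

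\textbf{How the paper proceeds.} It avoids slicing altogether. Let $v_1,\dots,v_{d-k+1}$ be the vertices of a simplex in $\rr^{d-k}$.
\begin{enumerate}
\item Lift every flat $H$ of the $i$-th pair to $H\times\{v_i\}\subseteq\rr^{2d-k}$.
\item Apply \cref{thm:desantis-radon} to these $2(d-k+1)=(2d-k)-k+2$ flats.
\item Uniqueness of barycentric coordinates in the simplex shows the partition can be taken colorful.
\item The common $k$-flat has bounded, hence constant, last $d-k$ coordinates, so it projects to a $k$-flat in $\rr^d$.
\end{enumerate}

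\textbf{How your slicing induction could be completed.} The missing ingredient is the structure of $C$. Write each flat of $A$ as $p_i+U_i$. Then
\[
\conv(\cup A)=\bigcup_{\emptyset\ne I}\Bigl(\operatorname{relint}\conv\{p_i\}_{i\in I}+\sum_{i\in I}U_i\Bigr),
\]
and similarly for $B$. Hence $C$ is a finite union of convex pieces $X$, each satisfying $X+W_X=X\subseteq K_X+W_X$ with $K_X$ compact and $W_X$ a linear subspace.
\begin{itemize}
\item Follow rays inside $F_s$. Since a vector space is not a finite union of proper subspaces, each $F_s$ lies in a single piece $X$ with $U_s\subseteq W_X$, where $U_s$ is the direction of $F_s$.
\item Pigeonhole over $s\in S$ to get one piece on which $\phi$ is unbounded.
\item Then $W_X\not\subseteq\ker\phi$ while $U_s\subseteq W_X\cap\ker\phi$, so $\dim W_X\ge k$ and $X$ contains a $k$-flat.
\end{itemize}
Without an argument of this kind, the proposal does not prove the theorem.
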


The result is called colorful since we can think of each pair as a color class.  The parts of the partition have exactly one $k$-flat of each color.  The case $k=0$ is known as the colorful Radon theorem.  This result was proved by Lov\'asz using the Borsuk--Ulam theorem \cite{Barany:1992tx}, and a second proof using elementary linear algebra was given by the second author \cite{Soberon:2015cl}.  Our proof of \cref{thm:kflat-colorful-Radon}, applied when $k=0$, is a new proof of the colorful Radon theorem.  In particular, it shows that colorful Radon is a direct consequence of Radon's theorem.

Additionally, we prove several generalizations of \cref{thm:desantis-helly}.  We prove fractional and colorful versions of the De Santis theorem, which generalize the colorful and fractional versions of Helly's theorem.

\begin{theorem}[Colorful De Santis]\label{thm:colorful-desantis}
    Let $0 \le k \le d$ be integers, and let $\ff_1,\dots, \ff_{d-k+1}$ be finite families of closed convex sets in $\rr^d$.  If the intersection of every $(d-k+1)$-tuple $F_1 \in \ff_1,\dots, F_{d-k+1}\in \ff_{d-k+1}$ contains a $k$-flat, then there is some family $\ff_i$ such that $\bigcap \ff_i$ contains a $k$-flat.
\end{theorem}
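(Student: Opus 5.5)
The plan is to adapt Lov\'asz's extremal argument for the colorful Helly theorem, with \cref{thm:desantis-helly} playing the role of Helly's theorem. Two reductions come first. By a standard compactness argument (intersecting everything with a large ball is \emph{not} allowed, since flats are unbounded), I instead keep the sets closed and work with the following characterization. A closed convex set $C$ contains a $k$-flat with direction $V$ if and only if $V \subseteq \lin(C)$, where $\lin(C)$ is the lineality space of $C$, i.e. the largest linear subspace contained in its recession cone. Hence $\bigcap \mathcal{G}$ contains a $k$-flat if and only if $\dim \lin(\bigcap \mathcal{G}) \ge k$, provided $\bigcap\mathcal{G}\neq\emptyset$, because $\lin(\bigcap \mathcal{G}) = \bigcap_{G\in\mathcal{G}} \lin(G)$ for nonempty intersections. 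This turns the problem into a statement combining a nonemptiness condition with a linear-algebraic condition on lineality spaces.

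\textbf{Extremal setup.} Assume, for contradiction, that no $\bigcap \ff_i$ contains a $k$-flat. I would fix a generic linear order on candidate configurations: for a subfamily $\mathcal{G}$ whose intersection contains a $k$-flat, let $h(\mathcal{G})$ be the lexicographically smallest value of a generic functional, measured over $(\lin(\bigcap\mathcal{G}), \bigcap\mathcal{G})$. For instance, take the smallest value of $\langle c,\cdot\rangle$ over points of $\bigcap\mathcal{G}$ projected onto $\lin(\bigcap \mathcal{G})^\perp$, after first minimizing the dimension-type invariant of the lineality space. Among all partial colorful selections $\mathcal{G}=\{F_j : j\neq i\}$ of size $d-k$ (one set from each family except $\ff_i$), I choose one maximizing $h$. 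By hypothesis every colorful $(d-k+1)$-tuple has a $k$-flat in its intersection, so every such $\mathcal{G}$ does too, and $h$ is defined.

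\textbf{Key step.} Since $\bigcap \ff_i$ contains no $k$-flat, \cref{thm:desantis-helly} applied to $\ff_i \cup \mathcal{G}$ yields a subfamily of at most $d-k+1$ sets with no $k$-flat in its intersection. Because every colorful $(d-k+1)$-tuple and every subfamily of $\ff_i$ alone behaves well, a De Santis-type \emph{minimality} statement is needed here: the "extremal" $k$-flats of $\bigcap\mathcal{G}$ are determined by at most $d-k$ of the sets. This is the analogue of the lowest point of an intersection being determined by $d$ sets. Using it, I exchange one set of $\mathcal{G}$ for a set $F\in\ff_i$ to obtain a new partial colorful selection $\mathcal{G}'$ with $h(\mathcal{G}') > h(\mathcal{G})$, contradicting maximality. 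The sets of $\mathcal{G}$ that do not determine the optimum are removed, freeing a color; together with $F$ this produces a colorful family whose intersection still contains a $k$-flat (by the hypothesis) but whose optimum is strictly higher.

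\textbf{Main obstacle.} The hard part is the determination lemma, and making $h$ well behaved. Lineality spaces can jump (different colorful tuples can contain $k$-flats in different directions), and the optimum of a linear functional over an unbounded closed convex set may not be attained. My first attempt would handle this in two stages. First apply a colorful argument purely to the lineality spaces, using a dimension count in the Grassmannian. Then, once a common direction $V$ with $\dim V \ge k$ is fixed, quotient by $V$ and apply the ordinary colorful Helly theorem in $V^\perp \cong \rr^{d-k}$ to the projections, where $d-k+1$ colors is exactly the right number. If the lineality stage cannot be made colorful directly, I would fall back on perturbing the sets so that all optimizers are unique and attained, and pass to the limit using closedness.
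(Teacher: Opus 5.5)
Your skeleton matches the paper's. You choose, among colorful $(d-k)$-tuples, one whose intersection is extremal for a lexicographic order whose first key is the dimension of the largest contained flat. You then show every set of the missing color contains the extremal flat, using a lemma that this flat is determined by at most $d-k$ of the sets. But both ingredients that make this a proof are left open, so there is a genuine gap.

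\textbf{The order is not well defined.} A linear functional can be unbounded below on the line-free part of an intersection. For example, $\{y\ge|x|\}$ and $\{y\le-|x|\}$ in $\rr^2$ can both occur, and no nonzero $c$ is bounded below on both. A linear functional can also have an unattained infimum, as with $\{x>0,\ xy\ge 1\}$ and $c=(0,1)$. So $h$, and the phrase ``strictly higher optimum'', have no meaning. The paper instead uses the distance to the origin, $r(L)=(d-\dim L,\dist(\bar 0,L))$. Since $K=\lin(K)+(K\cap\lin(K)^{\perp})$ and a closed convex set has a unique nearest point to $\bar 0$, the minimal flat $M(K)$ always exists and is unique.

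\textbf{The determination lemma is only postulated.} You rightly call it the hard part, but do not prove it. In the paper it is \cref{lem:minimize-small-family}: if $L=M(\bigcap\ff)$ has dimension $\ell$, then at most $d-\ell$ members already have minimal flat $L$. The proof works in $V=L^{\perp}$, which has dimension $d-\ell$. The lineality is pinned by sets whose separating normals span $V$. The distance is pinned by Helly's theorem in $V$, applied to the projections together with the open half-space $\{x:\langle x,p\rangle<\langle p,p\rangle\}$, where $p$ is the nearest point. This uses at most $d-\ell\le d-k$ sets in total. Applied to the colorful $(d-k+1)$-tuple, it gives at most $d-k$ of its sets with the same, strictly larger, minimal flat. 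Padding with other sets of that tuple then gives a colorful $(d-k)$-tuple beating the maximum. De Santis's Helly theorem plays no role here. Your claim that subfamilies of $\ff_i$ ``behave well'' is also unfounded: two sets of $\ff_i$ may be disjoint.

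\textbf{The fallbacks do not close the gap.} In the two-stage plan, nothing links the color and direction $V$ from the lineality stage to the color returned by colorful Helly on the projections. For sets not invariant under $V$, a common point of their projections to $V^{\perp}$ gives no flat parallel to $V$. For instance, take $d=3$, $k=1$, $\ff_1=\{\{z\le 0\},\{z\ge 1\}\}$ and $\ff_2=\ff_3=\{\{x=0\}\}$.
\begin{itemize}
\item The hypothesis holds, and color $1$ has common lineality the $xy$-plane, so the first stage may return the $x$-axis.
\item In the $yz$-plane the projections of color $1$ are disjoint, so colorful Helly must return color $2$ or $3$.
\item The set $\{x=0\}$ contains no line parallel to the $x$-axis, so the argument produces nothing.
\end{itemize}
Perturbation fails too. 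Containing a $k$-flat is not stable: slightly rotating one of two overlapping parallel strips destroys all common lines. And the flats may escape to infinity in the limit.
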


This is also known as a colorful theorem, as we can consider each $\ff_i$ as a color class.  The condition says that if the intersection of every rainbow family contains a $k$-flat, there must be a color whose intersection contains a $k$-flat.  The case $\ff_1 = \dots = \ff_{d-k+1}$ of \cref{thm:colorful-desantis} is precisely \cref{thm:desantis-helly}.  Our proof applied to this case gives a new proof of \cref{thm:desantis-helly} that does not use the Radon theorem for $k$-flats.

\begin{theorem}[Fractional De Santis]\label{thm:frational-desantis}
    Let $0 \le k \le d$ be integers and $0< \alpha \le 1$ be a real number.  There exists a constant $\beta = \beta(\alpha,k,d) > 0$ such that the following holds.  For every finite family $\ff$ of at least $d-k+1$ closed convex sets in $\rr^d$, if the intersection of at least $\alpha \binom{|\ff|}{d-k+1}$ of the $(d-k+1)$-tuples of $\ff$ contain a $k$-flat, then there exists $\mathcal{G} \subseteq \ff$ such that $|G|\ge \beta |\ff|$ and $\bigcap \mathcal{G}$ contains a $k$-flat.  Moreover $\beta \ge \frac{\alpha}{d-k+1}.$
\end{theorem}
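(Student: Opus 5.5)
The plan is to follow the lexicographic-minimum proof of the fractional Helly theorem (Katchalski--Liu), replacing points by $k$-flats. That proof yields $\beta \ge \alpha/(d+1)$ when $k=0$. Set $m = d-k+1$ and $n=|\ff|$. We may assume the sets are compact after intersecting with a large ball? No: that would destroy the $k$-flats. Instead we work directly with a well-defined ``extremal flat'' for each $m$-tuple whose intersection contains a $k$-flat.

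The first step is to fix a generic linear order on $k$-flats that plays the role of the lexicographic order. For a closed convex set $C$ containing a $k$-flat, the set of directions $V$ (linear $k$-subspaces) such that $C$ contains a translate of $V$ is determined by the recession cone of $C$. The lineality space $L$ of $C$ is then the relevant object.

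I would instead use the following cleaner route. For a closed convex set $K$ whose intersection family contains a $k$-flat, define a generic strictly convex objective $f$ on the (noncompact) space of pairs (direction, translate). The hardest part will be ensuring that a unique optimal flat exists and has the ``determined by $m$ sets'' property. The key lemma I would prove is this: if $\bigcap \mathcal{H}$ contains a $k$-flat for a finite family $\mathcal{H}$, then the optimal flat of $\bigcap\mathcal H$ equals the optimal flat of $\bigcap \mathcal{H}'$ for some $\mathcal{H}'\subseteq \mathcal{H}$ with $|\mathcal{H}'|\le d-k$. The proof would go by contradiction using \cref{thm:desantis-helly} in the same way as for points. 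Suppose every subfamily of size $d-k$ has optimal flat strictly better than $\ell$, the optimum of $\mathcal H$. Apply \cref{thm:desantis-helly} to the family $\mathcal H \cup \{H_\ell\}$, where $H_\ell$ is the closed convex set of ``points on flats strictly better than'' $\ell$, suitably closed. This forces a flat in $\bigcap\mathcal H$ better than $\ell$. The subtle point is to make ``better than $\ell$'' a convex closed condition that is also compatible with flats. For this I would fix a generic direction order as follows. Choose the optimal flat by first taking the lineality-compatible direction minimizing a generic functional on the Grassmannian. Then take the lex-min translate in the orthogonal complement, so that the constraint set becomes a convex set in $\rr^{d-k}$ after projecting along the direction. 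Once the direction is fixed, the problem reduces to the point case in dimension $d-k$, where Helly number $d-k+1$ gives the $d-k$ bound exactly as in the classical proof.

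With the lemma, the counting is standard. Each of the at least $\alpha\binom{n}{m}$ good $m$-tuples has its optimal flat determined by a $(d-k)$-subset. Charging each good tuple to such a subset, some $(d-k)$-subset $S$ is charged at least $\alpha\binom{n}{m}/\binom{n}{m-1}$ times, which equals $\alpha (n-m+1)/m$. Each charged tuple is $S\cup\{F\}$ with the same optimal flat as $S$. Hence all those sets $F$, together with $S$, contain that flat, giving $|\mathcal G|\ge \alpha(n-m+1)/m + (m-1)\ge \alpha n/m$. So $\beta\ge \alpha/(d-k+1)$.

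The main obstacle is the lemma on the optimal flat. The difficulty is that directions of admissible $k$-flats interact with recession cones, so the optimum might not be attained or the direction might vary. If the generic-order approach fails, I would fall back on the Radon-based structure of \cref{thm:desantis-radon}. Alternatively, I would reduce to fixed directions using the fact that if a $k$-flat with direction $V$ lies in $\bigcap\mathcal H$, then $V$ lies in the lineality of every recession cone.
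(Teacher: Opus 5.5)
Your counting step is the same as the paper's. The gap is the key lemma: you state it but do not prove it, and neither route you sketch works as stated.

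\textbf{The $H_\ell$ route.} The contradiction via \cref{thm:desantis-helly} needs a convex $H_\ell$, and none exists. In a direction-first order, every $k$-flat whose direction beats that of $\ell$ is better than $\ell$, at any position. So the union of the better flats is all of $\rr^d$, unless $\ell$ already has the globally best direction. There is no analogue of the open half-space $H_p$.

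\textbf{The fallback route.} Fixing $V^*$ and running the point argument in $V^{*\perp}\cong\rr^{d-k}$ controls only the position. The subfamily $\mathcal{H}'$ it returns can have $\lin(\bigcap\mathcal{H}')\supsetneq\lin(\bigcap\mathcal{H})$. Your functional, now optimized over a larger Grassmannian, may then select a direction other than $V^*$. In that case the optimal flats of $\mathcal{H}'$ and $\mathcal{H}$ differ and the charging breaks. The extreme case: if $0$ lies in the projection of $\bigcap\mathcal{H}$, the point argument returns $\mathcal{H}'=\emptyset$, whose optimal flat has the globally best direction. So you must add sets that cut the lineality space back down, and the real content is showing the total stays at most $d-k$. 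You name this as the main obstacle but do not resolve it. Separately, a lex-min translate need not exist for unbounded sets; use Euclidean distance to the origin instead.

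\textbf{The missing idea.} The paper avoids choosing a $k$-dimensional direction at all. It orders flats by dimension first and uses $M(K)$, the translate of the full lineality space $\lin K$ nearest the origin; this flat has dimension $\ell\ge k$. \cref{lem:minimize-small-family} then produces at most $d-\ell$ sets, in two cases.
\begin{itemize}
\item If $0\in K$, this is linear algebra: the lineality space of an intersection of closed convex sets is the intersection of their lineality spaces.
\item Otherwise, Helly in $L^\perp$ with the open half-space $H_p$ gives $\mathcal{G}'$ with $|\mathcal{G}'|\le d-\ell$ that fixes the position. If $\lin(\bigcap\mathcal{G}')$ has dimension $\ell'>\ell$, then at most $d-\ell'$ sets of $\mathcal{G}'$ already recover $M(\bigcap\mathcal{G}')$. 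At most $\ell'-\ell$ further sets then cut the lineality down to dimension $\ell$, for at most $d-\ell$ sets in total.
\end{itemize}
This trade-off between fixing position and cutting lineality is what your sketch lacks.

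It also rescues your order. Write $\phi(W)$ for the direction your functional selects inside a subspace $W$, and use nearest translates. Then your optimal $k$-flat of $K$ is $q+\phi(\lin K)$, where $M(K)=q+\lin K$. So it depends only on $M(K)$, and your lemma follows directly from the paper's.
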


The case $k=0$ of \cref{thm:colorful-desantis} is the colorful Helly theorem by Lov\'asz \cite{Barany1982}, and the case $k=0$ of \cref{thm:frational-desantis} is the fractional Helly theorem of Katchalski and Liu \cite{Katchalski1979}.  The standard proof of both results relies on the idea of choosing a direction $v$ and finding the $v$-directional minimum of a convex set.  Then, a small family whose intersection has the largest $v$-directional minimum implies the conclusion required by the theorem (either a full family $\ff_i$ containing that point in the case of colorful Helly, or a large subfamily containing that point in the case of fractional Helly).  For our purposes, we cannot use such a minimization, since $k$-flats may not be bounded in the directions we need.

To fix this issue, we introduce an ordering on the space of affine flats of $\rr^d$.  Our main technical lemma, \cref{lem:minimize-small-family}, says that the minimum flat (in this order) contained in an intersection of convex sets is determined by a small subfamily of convex sets.  This allows us to prove both theorems listed above.  When we set $k=0$, our methods give new proofs of the colorful Helly theorem and fractional Helly theorem, since the minimization process is different.

Finally, there are now broad generalizations of the colorful Helly theorem.  A well-known example is the matroidal Helly theorem by Kalai and Meshulam \cite{Kalai2005}, which replaces the coloring conditions from the colorful Helly theorem by conditions related to the independence complex of a matroid.  The second author recently generalized the Kalai--Meshulam Helly theorem to selection structures \cite{soberon2026kkm-arxiv}.  These are structures that generalize the independence complexes of matroids.  They allow us to prove Helly-type results where the coloring condition are encoded by chessboard complexes or independence complexes.  

We prove a selection-structure generalization of De Santis' theorem in \cref{thm:selection-structure-desantis}.  We defer the statement of this result until \cref{sec:selection-structures}, as it requires additional definitions.  \cref{thm:selection-structure-desantis} generalizes our colorful Helly theorem, \cref{thm:colorful-desantis}.  We keep a proof of \cref{thm:colorful-desantis} in \cref{sec:proofs-for-hellys} as it is easier to read and requires fewer definitions.  If we set $k=0$, \cref{thm:selection-structure-desantis} generalizes the selection-structure Helly theorem of the second author \cite{soberon2026kkm-arxiv}*{Thm 6}, which in turn implies the Kalai--Meshulam topological colorful Helly theorem applied to families of convex sets in $\rr^d$.

We first prove the colorful Radon theorem for $k$-flats in \cref{sec:colorful-radon}.  We introduce our new minimization argument in \cref{sec:order-of-k-flats}, and use it to prove \cref{thm:colorful-desantis} and \cref{thm:frational-desantis} in \cref{sec:proofs-for-hellys}.  We define selection structures and prove \cref{thm:selection-structure-desantis} in \cref{sec:selection-structures}.  We conclude with remarks in \cref{sec:remarks}.

\section{Colorful Radon for affine flats}\label{sec:colorful-radon}

\begin{proof}[Proof of \cref{thm:kflat-colorful-Radon}]
    Let $v_1,\dots, v_{d-k+1}$ be the vertices of a regular simplex in $\rr^{d-k}$.  Let $F_1,\dots, F_{d-k+1}$ be the pairs of affine spaces given to us.

    For each affine flat $H \in F_i$, let $\tilde{H} \subset \rr^{d}\times \rr^{d-k} = \rr^{2d-k}$ be the set of points obtained from appending the coordinates of $v_i$ to each point of $H$.  Note that $\tilde{H} \subseteq \rr^{2d-k}$ is also a $k$-dimensional affine flat.  See \cref{fig:append} for an illustration.

    \begin{figure}
        \centering
        \includegraphics[]{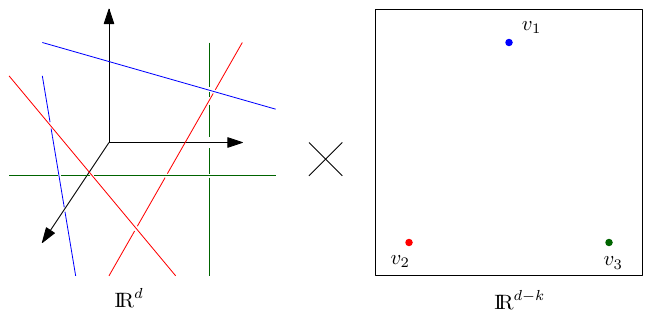}
        \caption{An example of the construction with $d=3,k=1$.  We have three pairs of lines in $\rr^3$, and we consider the vertices $v_1,v_2,v_3$ of a triangle in the plane.  A line $\ell$ in the pair $F_i$ is replaced by the set $\ell \times \{v_i\}$.}
        \label{fig:append}
    \end{figure}

    Let $\pi_1: \rr^{2d-k}\to \rr^d$ be the projection to the first $d$ coordinates, and $\pi_2: \rr^{2d-k}\to \rr^{d-k}$ be the projection to the last $d-k$ coordinates.

    If we lift each flat in the hypothesis to $\rr^{2d-k}$, we obtain $2(d-k+1) = (2d-k)-k+2$ affine spaces of dimension $k$ in $\rr^{2d-k}$.  We can apply \cref{thm:desantis-radon} to obtain a partition of them into two sets $A, B$ such that $\conv(\cup A) \cap \conv (\cup B)$ contains a $k$-dimensional flat.

    Let us verify that this partition has exactly one flat from each set.  For each flat $\tilde{H}$, its projection $\pi_2(\tilde{H})$ is a single point in $\rr^{d-k}$.  Notice that $\pi_2(A) \cup \pi_2(B)$ consists of two copies of $v_1,\dots, v_{d+1-k}$.  A point $p \in \conv \{v_1,\dots,v_{d-k+1}\}$ can be written as a convex combination of $v_1,\dots,v_{d-k+1}$ in a unique way.  Therefore, if $x \in \conv(\cup A) \cap \conv(\cup B)$, and $\pi_2(x)$ is written as a convex combination of $v_1,\dots, v_{d-k+1}$, for any $v_j$ that receives a positive coefficient in the convex combination from $\pi_2(A)$ that gives $x$, we must have $v_j \in \pi_2(B)$ with the same coefficient.  In other words, every time we use an element from $F_j$ in $A$, we need an element from $F_j$ in $B$, and vice versa.

    Let $L$ be a $k$-dimensional flat in $\conv(\cup A) \cap \conv(\cup B)$.  We know $\pi_2(L) \subset \conv (v_1,\dots, v_{d-k+1})$.  Therefore, $\pi_2(L)$ is bounded.  This means that $\pi_1(L)$ is a $k$-dimensional flat.  If we look at the partition induced on $\pi_1(A) \cup \pi_1(B)$, it satisfies the conditions we were looking for.  
\end{proof}

\section{Ordering flats in $\rr^d$}\label{sec:order-of-k-flats}

Let $L$ be an affine flat in $\rr^d$.  Let $r(L) = (d-\dim L, \dist(\bar{0},L))$ be the pair associated to $L$.  This allows us to order affine flats in $\rr^d$, by ordering their associated pairs lexicographically.  In other words, 
\[
(x,y) \le (x',y') \leftrightarrow \begin{cases}
    x < x' & \quad \mbox{or}  \\
    x=x' \mbox{ and } y \le y'.
\end{cases}
\]
Let $K \subseteq \rr^d$ be a closed convex set.  Let $M(K)$ be the affine flat $L\subseteq K$ such that $r(L)$ is minimal.  First we restrict ourselves to flats of the highest possible dimension, and then we choose the one closest to the origin.

\begin{lemma}
    For every closed convex set $K \subseteq \rr^d$, the affine flat $M(K)$ exists and is unique.
\end{lemma}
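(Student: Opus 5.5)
The plan is to identify the maximal-dimensional flats in $K$ explicitly via the lineality space of $K$. Then the second minimization reduces to finding the unique nearest point of a closed convex set to the origin. Throughout I assume $K\neq\emptyset$, since otherwise $K$ contains no flat at all. Note that a single point is a $0$-flat, so the set of flats contained in $K$ is nonempty.

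\emph{Step 1: the direction space of every flat in $K$.} Let $V=\{v : x+tv\in K \text{ for all } x\in K,\ t\in\rr\}$ be the lineality space of $K$. It is a linear subspace, namely the intersection of the recession cone of $K$ with its negative. I will show that if $L=x+W\subseteq K$ is an affine flat with direction space $W$, then $W\subseteq V$. This is the standard fact that for a closed convex set, a ray $\{x+tw: t\ge 0\}$ contained in $K$ from \emph{one} point $x$ implies that $w$ lies in the recession cone. The argument takes $y\in K$, considers the points $(1-\lambda)y+\lambda(x+tw)\in K$, sets $t=s/\lambda$ and lets $\lambda\to 0$, using closedness of $K$. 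Applying this to $w$ and $-w$ gives $w\in V$. Conversely, for every $x\in K$, the flat $x+V$ lies in $K$. Hence the maximum dimension of a flat in $K$ is exactly $\dim V$, and the flats of that dimension in $K$ are precisely the sets $x+V$ with $x\in K$. Indeed, a flat $x+W$ with $W\subseteq V$ and $\dim W=\dim V$ has $W=V$. This step is the only place where closedness enters nontrivially, and I expect it to be the main obstacle, although it is classical.

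\emph{Step 2: minimizing the distance.} Let $P$ be the orthogonal projection onto $V^{\perp}$. Then $\dist(\bar 0, x+V)=|P(x)|$. Since $K=K+V$, we have $P(K)=K\cap V^{\perp}$, which is a nonempty closed convex set. It therefore has a unique point $p$ of minimal norm, by the standard nearest-point property: existence follows from compactness of $P(K)$ intersected with a large ball, and uniqueness from strict convexity of the Euclidean norm. The flat $p+V\subseteq K$ then minimizes $r$, so $M(K)$ exists.

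\emph{Step 3: uniqueness.} Suppose two flats of minimal $r$ exist. By Step 1 they have the form $x+V$ and $y+V$ with $x,y\in K$. Minimality of the second coordinate forces $P(x)$ and $P(y)$ both to be minimal-norm points of $P(K)$, so $P(x)=P(y)=p$. Hence $x+V=p+V=y+V$.
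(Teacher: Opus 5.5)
Your proof is correct and follows essentially the same route as the paper: closedness forces all maximal-dimensional flats in $K$ to be parallel (you argue via the lineality space, the paper via the Minkowski sum $L \oplus L'$), and the unique nearest point to the origin of the projection onto the orthogonal complement then determines the flat. Your version is more complete than the paper's in three places the paper leaves implicit: it proves existence explicitly, it shows the projection equals $K \cap V^{\perp}$ (and is therefore closed), and it checks that $p+V \subseteq K$.
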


\begin{proof}
    Assume for a contradiction that $K$ has two minimal affine spaces $L, L'$.  Then, they must have the same dimension.  Since $K$ is closed, we also have that $L$ and $L'$ must be parallel.  Otherwise, $K$ would contain a translation of $L \oplus L'$, where $\oplus$ denotes the Minkowski sum, and this would be a higher-dimensional affine space contained in $K$, contradicting the minimality of $r(L)$ and $r(L')$.  Let $V = L^{\perp}$, and $\pi : \rr^d \to V$ be the orthogonal projection.  Since $L$ is the affine space of the largest possible dimension contained in $K$, we know that $\pi(K)$ contains no lines and is convex.  Moreover, $\pi(L)$ and $\pi(L')$ are points.

    Additionally, $\dist(\bar{0},L) = \dist (\bar{0}, \pi(L)) = \|\pi (L)\|$, and the same holds for $\pi(L')$.  However, since $\pi(K)$ is closed and convex, there is a unique point $p \in \pi(K)$ minimizing the distance to the origin (which could be the origin itself).  The flat $L''$ parallel to $L$ through $p$ satisfies $r(L'') \le r(L)$ and $r(L'') \le r(L')$, and equality can only happen if $L=L'=L''$, contradicting the assumption that $K$ had two different minimal affine flats.
\end{proof}

\begin{lemma}\label{lem:minimize-small-family}
Let $\ff$ be a finite family of closed convex sets in $\rr^d$ with non-empty intersection, and let $L = M(\bigcap \ff)$, and $\ell = \dim L$.  Then, there exists a family $\mathcal{G} \subseteq \ff$ of at most $d-\ell$ sets such that $L = M(\bigcap \mathcal{G})$.    
\end{lemma}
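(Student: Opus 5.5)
The plan is to quotient out the direction of $L$, which reduces the statement to choosing a minimum-norm point in dimension $d-\ell$. That point is then certified by Helly's theorem applied with one extra half-space.

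\textbf{Reduction to a line-free problem.} Put $K=\bigcap\ff$, let $W$ be the linear direction space of $L$ (so $\dim W=\ell$), let $V=W^\perp$, and let $\pi:\rr^d\to V$ be the orthogonal projection.

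First I will check that $W$ is the lineality space of $K$, meaning the largest subspace $U$ with $K+U=K$. Since $K$ is closed and convex and contains the flat $L$, we have $K+W=K$. By the argument in the uniqueness lemma, a strictly larger lineality space would give a flat of dimension greater than $\ell$ inside $K$, which is impossible.

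The lineality space of an intersection is the intersection of the lineality spaces. Hence every $F\in\ff$ satisfies $F+W=F$, so $F=(F\cap V)+W$ and $\pi(F)=F\cap V$ is closed. The same holds for every subfamily $\mathcal G\subseteq\ff$, because $\bigcap\mathcal G\supseteq K$.

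So the whole problem lives in $V\cong\rr^{d-\ell}$. There $\pi(K)=\bigcap_{F\in\ff}\pi(F)$ is line-free and $p=\pi(L)$ is its point of minimum norm; write $\rho=\|p\|$, which may be $0$.

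It is enough to find $\mathcal G\subseteq\ff$ with $|\mathcal G|\le d-\ell$ such that two things hold. First, $\bigcap_{F\in\mathcal G}\pi(F)$ contains no line, so its largest flats are exactly the translates of $W$. Second, $p$ is the minimum-norm point of $\bigcap_{F\in\mathcal G}\pi(F)$. Together these give $M(\bigcap\mathcal G)=p+W=L$.

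\textbf{Helly with a separating half-space.} Since $p$ is the minimum-norm point of $\pi(K)$, every $y\in\pi(K)$ satisfies $\langle y,p\rangle\ge\rho^2$.

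Line-freeness is the delicate part, so I will build it into the half-space by a perturbation. The recession cone of $\pi(K)$ is pointed, so there is a direction $c$ with $\langle c,r\rangle>0$ for every nonzero recession direction $r$ of $\pi(K)$. This is an open condition on $c$. There are only finitely many subfamilies $\mathcal G$, hence finitely many nonzero lineality spaces of the sets $\bigcap_{F\in\mathcal G}\pi(F)$. So I can also choose $c$ so that it is orthogonal to none of those lineality spaces.

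For $\delta>0$ set $w_\delta=p+\delta c$ and define the open half-space
\[H_\delta=\{y\in V:\langle y,w_\delta\rangle<\langle p,w_\delta\rangle\}.\]
For small $\delta$ the point $p$ minimizes $\langle\cdot,w_\delta\rangle$ over $\pi(K)$, so $\pi(K)\cap H_\delta=\emptyset$. I expect this to follow from the first-order optimality of $p$ together with the choice of $c$, but it needs an actual check, because $\pi(K)$ is unbounded and pointedness of the recession cone alone does not obviously give it.

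Now apply Helly's theorem in $V$, whose dimension is $d-\ell$, to the finite family $\{\pi(F):F\in\ff\}\cup\{H_\delta\}$. This family has empty intersection, so some $d-\ell+1$ of its members already have empty intersection. Every subfamily of the $\pi(F)$ alone contains $p$, so the empty subfamily must include $H_\delta$. This gives $\mathcal G_\delta\subseteq\ff$ with $|\mathcal G_\delta|\le d-\ell$ and
\[\bigcap_{F\in\mathcal G_\delta}\pi(F)\subseteq\{y:\langle y,w_\delta\rangle\ge\langle p,w_\delta\rangle\}.\]

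\textbf{Limit argument.} Take $\delta\to0$ along a sequence. Only finitely many subfamilies exist, so a single $\mathcal G$ occurs infinitely often. Letting $\delta\to0$ in the displayed inclusion gives $\bigcap_{F\in\mathcal G}\pi(F)\subseteq\{\langle y,p\rangle\ge\rho^2\}$. This set contains $p$, so $p$ is its minimum-norm point.

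Suppose $\bigcap_{F\in\mathcal G}\pi(F)$ contained a line with direction $u$. Then both $\pm u$ are recession directions of a set lying in a half-space with normal $w_\delta$, so $\langle u,p+\delta c\rangle=0$ for infinitely many $\delta$. This forces $u\perp p$ and $u\perp c$, which contradicts the choice of $c$. Hence the intersection is line-free, and both required properties hold.

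The main obstacle is exactly this line-freeness. Using the naive half-space $\{\langle y,p\rangle<\rho^2\}$ would yield subfamilies whose intersection contains a flat $p+U$ with $U\supsetneq W$ and $U\cap V\perp p$; in that case $M$ jumps to a higher-dimensional flat. The generic perturbation $c$ is there to rule this out, and the points needing the most care are the claim $\pi(K)\cap H_\delta=\emptyset$ for small $\delta$ and the case $\rho=0$. In that case $w_\delta=\delta c$ and the same argument applies verbatim.
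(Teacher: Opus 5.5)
\textbf{The perturbed half-space can meet $\pi(K)$.} The step that breaks is the one you flagged, $\pi(K)\cap H_\delta=\emptyset$ for small $\delta$. The problem is not unboundedness: the claim is false in general, and your two requirements on $c$ can be incompatible.

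Since $p\in\pi(K)$, disjointness from $H_\delta$ says exactly that $p$ minimizes $\langle\cdot,w_\delta\rangle$ on $\pi(K)$. Equivalently, $-w_\delta$ lies in the normal cone $N$ of $\pi(K)$ at $p$.
\begin{itemize}
\item If $\rho=0$, this needs $-c\in N$, which is impossible for any direction $c$ when $p$ is an interior point. For instance, with $d=1$ and $\ff=\{[-1,1]\}$, every $H_\delta$ meets $[-1,1]$.
\item If $\rho>0$, $N$ can be just the ray spanned by $-p$ (a smooth boundary point), which forces $c\parallel p$. But the lineality spaces your limit argument must exclude are those with $\Lambda\perp p$, and for them $p\in\Lambda^\perp$. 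So your genericity condition forbids exactly this choice.
\end{itemize}

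Concretely, take $d=2$, $F_1=\{x: x_1\ge 1\}$, and $F_2$ the closed unit disk centred at $(2,0)$. Then $\bigcap\ff=F_2$ and $L=\{p\}$ with $p=(1,0)$. The family $\{F_1\}$ has lineality space the $x_2$-axis, so you need $c_2\neq 0$. Then $w_\delta$ is not a multiple of $p$, so the disk contains a point with $\langle y,w_\delta\rangle<\langle p,w_\delta\rangle$. Hence $H_\delta$ meets $\bigcap\ff$ for every $\delta>0$, and Helly cannot be invoked. Choosing $c\parallel p$ instead gives back the naive half-space $\{x_1<1\}$, for which Helly may return $\{F_1\}$, whose minimal flat is the line $x_1=1$. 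So a single perturbed half-space cannot certify both ``$p$ is nearest'' and ``no extra lineality''. The limit argument never starts, even though the lemma holds here with $\mathcal G=\{F_2\}$.

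\textbf{What is missing.} You need to decouple the two requirements, which is what the paper does.
\begin{itemize}
\item When $\bar{0}\in\bigcap\ff$, no Helly is needed. Choose at most $d-\ell$ sets whose lineality spaces already intersect in the direction space $W$ of $L$. The paper uses separating normals spanning $V$; a greedy choice also works, since each new set drops the dimension of the common lineality by at least one.
\item When $\bar{0}\notin\bigcap\ff$, apply Helly with the unperturbed $H_p$ to get $\mathcal G'$ with $|\mathcal G'|\le d-\ell$. As you noted, $M(\bigcap\mathcal G')=L'$ is then a flat through $p$ of some dimension $\ell'\ge\ell$, whose direction space $U'\supseteq W$ is orthogonal to $p$. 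If $\ell'>\ell$, replace $\mathcal G'$ by at most $d-\ell'$ of its sets with the same minimal flat $L'$. Then add at most $\ell'-\ell$ sets of $\ff$ that cut the common lineality from $U'$ down to $W$. Adding sets cannot destroy the half-space certificate, and the count is $(d-\ell')+(\ell'-\ell)=d-\ell$.
\end{itemize}
In the shrinking step, make sure the subfamily keeps the half-space certificate, not merely the lineality space $U'$. Arguing by induction on $d-\ell$, i.e.\ applying the lemma itself to $\mathcal G'$, gives this directly.
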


Notice that if $\bigcap \ff$ contains a $k$-dimensional flat, we have $\ell \ge k$, which implies $|\mathcal{G}| \le d-\ell \le d-k$.

For a convex set $K \subseteq \rr^d$, let $\lin(K)$ be the largest linear space contained in some translate of $K$.  The space $\lin(K)$ is independent of the translation used, as long as it contains $\bar{0}$.

\begin{proof}
    We split the proof into two cases depending on whether $\bar{0} \in \bigcap \ff$ or $\bar{0} \not\in \bigcap \ff$.  Let $V = L^{\perp}$, let $\pi:\rr^d \to V$ be the orthogonal projection, and $K = \pi (\bigcap \ff)$.

    \textbf{Case 1.} $\bar{0} \in \bigcap \ff$

Let $s(K)$ be the set of non-zero vectors $v \in V$ such that $\pi(F)$ is contained in a half-space orthogonal to 
$v$ for some $F \in \ff$.

We claim that $\operatorname{span}(s(K)) = V$.  Otherwise, there would be a non-zero vector $u \in V$ that is orthogonal to all $v \in s(K)$.  Since $\pi(K)$ contains no lines, there is some scalar $\alpha$ (which could be positive or negative), such that $\alpha u \not\in K = \bigcap \ff$.  Therefore, there exists $F \in \ff$ such that $\alpha u \not\in \pi(F)$.  There must be a hyperplane separating $\pi(F)$ from $\alpha u$.  The normal vector to this hyperplane must be an element of $s(K)$.  Since this hyperplane would also be separating $0$ from $\alpha u$, its normal vector cannot be orthogonal to $v$, contradicting the choice of $u$.

Since $\operatorname{span}(s(K))= V$, we can choose a basis of $V$ from those vectors.  Let $v_1,\dots,v_{d-\ell}$ be the basis we selected, assigned to the sets $F_1,\dots, F_{d-\ell}$ in $\ff$, respectively.  By the same arguments as above, the set $\bigcap_{i=1}^{d-\ell}\pi(F_i)\subseteq V$ contains no lines.  The family $\mathcal{G} = \{F_1,\dots,F_{d-\ell}\}$ is the family we were searching for.

 \textbf{Case 2.} $\bar{0} \not\in \bigcap \ff$

Let $p \in V$ be the closest point to the origin in $K$, and let $H_p =\{x \in V : \langle x, p \rangle < \langle p, p\rangle \}$.  We have $K \cap H_p = \emptyset$, as otherwise there would be points of $K$ closer to $\bar{0}$, see \cref{fig:construction-Hp}.  Consider $\ff' = \{\pi(F): F \in \ff\}\cup \{H_p\}$.  The family $\ff'$ has empty intersection.  By Helly's theorem, there must be a subfamily of size at most $d - \ell +1$ that has empty intersection.  Since $\ff$ is intersecting, the non-intersecting small set must consist of $H_p$ and $\{\pi (F): F \in \mathcal{G}'\}$ for some $\mathcal{G}' \subseteq \ff$ with $|\mathcal{G}'| \le d-\ell$.

We might still have $M(\mathcal{G}') \neq L$, as $M(\mathcal{G}')$ might be higher-dimensional.  Let $L' = M(\mathcal{G}')$ and $\ell' = \dim (L')$.  If $\ell' = \ell$, we are done.  Otherwise, we have $\ell'> \ell$.  By the previous case, replacing $0$ by $p$, we know that there exists $\mathcal{G}'' \subseteq \mathcal{G}'$ such that
$|\mathcal{G}''| \le d-\ell'$ 
and $M(\mathcal{G}'') = M (\mathcal{G}') = L'$.

Let $V' = \{x \in L': (x-p) \perp L\}$.  The space $V'$ has dimension $\ell'-\ell$.  The set $\pi'(\bigcap F)$ contains no lines.  An analogous argument as in the previous case shows that we can find a set $\mathcal{J}\subseteq \ff$ of at most $\ell'-\ell$ sets such that $\bigcap \{\pi'(F) : F \in \mathcal{J}\}$ contains no lines.  Finally, we can take $\mathcal{G} = \mathcal{G}'' \cup \mathcal{J}$, which is a family of at most $(d-\ell') + (\ell'-\ell) = d-\ell$ sets that satisfies the conditions we wanted.

\begin{figure}
    \centering
    \includegraphics[]{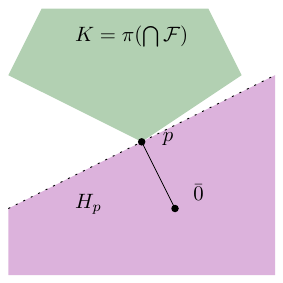}
    \caption{The construction of $H_p$.}
    \label{fig:construction-Hp}
\end{figure}

%    In this case, $L = \lin(\bigcap \ff)$.  We know that every set in $\mathcal{F}$ contains $L$ and we just need to find a family $\mathcal{G}$ that does not contain any other direction.  We will construct $\mathcal{G}$ inductively.  First, set $\mathcal{G} = \emptyset$ and $L' = L$.  We know that if  
    
 %   Let $v$ be a vector not contained in $L'$.
\end{proof}

\section{Proofs of the colorful and fractional Helly theorems}\label{sec:proofs-for-hellys}

In this section we prove \cref{thm:colorful-desantis} and \cref{thm:frational-desantis}.

\begin{proof}[Proof of \cref{thm:colorful-desantis}]
    Consider $\mathcal{A}$ the set of $(d-k)$-tuples of sets $F_1,\dots, F_{d-k}$ from $\ff_1 \cup \ff_2 \cup \dots \cup \ff_{d-k+1}$ that use at most one set from each $\ff_i$.  In other words, $\mathcal{A}$ is the set of all rainbow $(d-k)$-tuples.

    By the conditions of the theorem, the intersection of every $(d-k)$-tuple in $\mathcal{A}$  contains a $k$-flat.  Let $(F_1,\dots, F_{d-k}) \in \mathcal{A}$ be the $(d-k)$-tuple such that $L=M(\bigcap_{i=1}^{d-k}F_i)$ is maximal in the order introduced in \cref{sec:order-of-k-flats}.  We may assume without loss of generality that $F_1 \in \ff_1, \dots, F_{d-k}\in \ff_{d-k}$.  We know that $L$ has dimension greater than or equal to $k$.  We will show that every set $F_{d-k+1}\in \ff_{d-k+1}$ satisfies $L \subseteq F_{d-k+1}$.

    Suppose for a contradiction that there exists $F_{d-k+1}\in \ff_{d-k+1}$ such that $L \not\subseteq F_{d-k+1}$.  Let $B = \bigcap_{i=1}^{d-k+1}F_i$.

    First, since $B \subseteq \bigcap_{i=1}^{d-k}F_i$, we have $r(M(B)) \ge r(L)$.  However, since $L \not\subseteq B$, we have $M(B) \neq L$.  Therefore, $r(M(B)) > r(L)$.  By the conditions of the theorem, the dimension of $M(B)$ is at least $k$.  Therefore, by \cref{lem:minimize-small-family}, there is a subfamily of at most $d-k$ elements of $\{F_1,\dots, F_{d-k}\}$ such that the minimum flat in their intersection is equal to $M(B)$.  If this family has fewer than $d-k$ sets, we can add sets until we have $d-k$.  This will give us a family in $\mathcal{A}$ whose minimum is larger than $L$, contradicting the choice of $L$.  This means that every set in $\ff_{d-k+1}$ contains $L$, as we wanted to show.
\end{proof}

\begin{proof}[Proof of \cref{thm:frational-desantis}]
    Let $n = |\ff|$ and let $\mathcal{A} \subseteq \binom{\ff}{d-k+1}$ be the family of subsets of $d-k+1$ sets of $\ff$ whose intersection contains a $k$-flat.  Let $\mathcal{B} = \binom{\ff}{d-k}$.

    \cref{lem:minimize-small-family} induces a function $t: \mathcal{A} \to \mathcal{B}$ so that for $\ff' \in \mathcal{A}$, we have $t(\ff')$ is a subset $\mathcal{G}' \subset \ff'$ of $d-k$ sets such that $M(\bigcap \ff') = M(\bigcap \mathcal{G}')$.  By the pigeonhole principle, there exists $\mathcal{G}' \in \mathcal{B}$ whose preimage satisfies
    \[
    |t^{-1}(\mathcal{G}')| \ge \frac{|\mathcal{A}|}{|\mathcal{B}|} \ge \frac{\alpha \binom{n}{d-k+1}}{\binom{n}{d-k}} = \frac{\alpha(n-d+k)}{d-k+1} \ge \left(\frac{\alpha}{d-k+1}\right) n - (d-k).
    \]
    Let $L = M(\bigcap \mathcal{G}')$.  This is a flat of dimension greater than or equal to $k$.  Each of the $d-k$ sets of $\mathcal{G}'$ contains $L$.  Additionally, every $(d-k+1)$-tuple in $t^{-1}(\mathcal{G}')$ corresponds to an additional set in $\ff\setminus \mathcal{G}'$ that contains $L$.  Therefore, there are at least $\alpha\cdot  n/(d-k+1)$ sets in $\ff$ that contain $L$.  In other words, we can take $\beta = \alpha/(d-k+1)$.
\end{proof}

\section{Selection structures and $k$-flats}\label{sec:selection-structures}

As mentioned in the introduction, selection structures provide us with a way to generalize colorful Helly results beyond matroids.  Selection structures also allow us to generalize colorful versions of the Borsuk--Ulam theorem \cite{soberon2026borsuk-arxiv}.  The goal of this section is to prove the selection-structure version of Helly for $k$-flats.  Let us start with the key definitions.

\begin{definition}
    Let $W$ be a finite set of vertices and $\kk$ be a simplicial complex with vertices in $W$.  For $U \subseteq W$, let $\kk[U] = \{\sigma \cap U: \sigma \in \kk\}$ be the restriction of $\kk$ to $U$.  Additionally, if $D=\{D_w : w \in W\}$ is an indexed family of non-empty finite sets, we define
    \[
    \kk(U;D) = \bigcup_{\sigma \in \kk[U]}\left(\bigast_{w \in \sigma} D_w\right)
    \]
\end{definition}

Here, $\bigast$ represents an iterated join.  Intuitively, $\kk(U;D)$ is the complex $\kk[U]$, but each vertex is replaced by the corresponding set $D_w$.  With this, we are ready to define selection structures.  We call the sets $D_w$ the \textit{fibers}, and the space $\kk(U;D)$ a \textit{parallel expansion} of $\kk[U]$.

\begin{definition}
    Let $W$ be a finite set and $d$ be a non-negative integer.  A $d$-admissibile selection structure $\Sigma = (\kk, \LL)$ on $W$ is a pair in which $\kk$ is a simplicial complex on $W$ and $\LL$ is a non-empty upward-closed family of sets in $W$ with the following two properties:
    \begin{itemize}
        \item For every indexed family $D=\{D_w:w \in W\}$ of non-empty finite sets and every $U \in \LL$, the complex $\kk(U;D)$ is $(d-1)$-connected.
        \item For every $U \in \LL$, every facet of $\kk[U]$ is a facet of $\kk$.
    \end{itemize}
\end{definition}

With this definition, we can now state the selection-structure De Santis theorem.

\begin{theorem}[Selection-structure De Santis]\label{thm:selection-structure-desantis}
Let $W$ be a finite set, $0 \le k \le d$ be integers, and $\Sigma = (\kk, \LL)$ be a $(d-k)$-admissible selection structure on $W$.  Suppose that for each $w \in W$ we have a closed convex set $F_w \subseteq \rr^d$.  Additionally, assume that for every $\sigma \in \kk$ we have that $\bigcap_{w \in \sigma}F_w$ contains a $k$-flat.  Then, there exists $W' \subseteq W$ such that $W \setminus W' \not\in \LL$ and $\bigcap_{w \in W'}F_w$ contains a $k$-flat.  
\end{theorem}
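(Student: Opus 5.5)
The plan is to adapt the proof of the selection-structure Helly theorem of \cite{soberon2026kkm-arxiv}*{Thm 6}, replacing the directional minimum of a point by the order on flats from \cref{sec:order-of-k-flats}. That proof should use only three properties of the minimization:
\begin{itemize}
    \item it is monotone under taking intersections;
    \item the minimizer of an intersecting subfamily is witnessed by a subfamily of bounded size;
    \item ``the minimizer lies in $F_w$'' is the certificate that $F_w$ contains the object of interest.
\end{itemize}
For us these are, respectively, that $B \subseteq B'$ implies $r(M(B)) \ge r(M(B'))$, \cref{lem:minimize-small-family} with the bound $d-k$ when the intersection contains a $k$-flat, and that $M(B) \subseteq F_w$ exhibits a flat of dimension at least $k$ inside $F_w$, hence a $k$-flat.

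First I would imitate the proof of \cref{thm:colorful-desantis}. Among all faces $\sigma \in \kk$ with $|\sigma| \le d-k$, choose one maximizing $r(M(\bigcap_{w\in\sigma}F_w))$, and let $L$ be that flat. By hypothesis $\dim L \ge k$. Set $W' = \{w \in W : L \subseteq F_w\}$. Then $\sigma \subseteq W'$ and $\bigcap_{w\in W'}F_w \supseteq L$ contains a $k$-flat, so it remains to show $U = W\setminus W' \notin \LL$.

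Suppose instead $U \in \LL$. The key step is to produce a face $\tau \in \kk$ of the form $\tau = \sigma' \cup \{u\}$, where $\sigma' \subseteq W'$ has the same minimizer as $\sigma$ (or a larger one) and $u \in U$. Given such a $\tau$, the argument of \cref{thm:colorful-desantis} applies verbatim. The set $B = \bigcap_{w\in\tau}F_w$ contains a $k$-flat, and $L \not\subseteq B$ because $L \not\subseteq F_u$, so $r(M(B)) > r(L)$. \cref{lem:minimize-small-family} then gives a subface of $\tau$ of size at most $d-k$ whose minimizer is $M(B)$; it is a face because $\kk$ is closed under subsets. This contradicts the maximality of $L$.

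Producing $\tau$ is where the two axioms of a $(d-k)$-admissible selection structure must enter, and I expect this to be the main obstacle. In the colorful case $\tau$ comes for free, since any color missing from $\sigma$ can be added. In general I would follow the topological route of \cite{soberon2026kkm-arxiv}. Apply \cref{lem:minimize-small-family} to every face of $\kk$, recording for each face a witness subface of size at most $d-k$ that determines its minimal flat. Then consider the parallel expansion $\kk(U;D)$, with fibers $D_w$ chosen to encode the rank of sets in the order $r\circ M$. Using $(d-k-1)$-connectivity, run a KKM/Sperner-type covering argument on this expansion to force a face of $\kk[U]$ that extends a witness for a globally maximal value. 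The second axiom, that facets of $\kk[U]$ are facets of $\kk$, lifts this face back to $\kk$.

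Concretely, I would first try to reprove Thm 6 of \cite{soberon2026kkm-arxiv} in an abstract form. There the ``value'' of a face is any element of a totally ordered set, monotone under enlarging the face, with a Helly-type witness of size at most the connectivity parameter plus one. I would then check that $r\circ M$ restricted to faces whose intersection contains a $k$-flat satisfies these axioms with parameter $d-k$. The delicate point is that faces with no $k$-flat never occur, by hypothesis on $\kk$, so the value function is defined on all of $\kk$. I would also need to handle ties in the order, which uniqueness of $M(K)$ resolves.
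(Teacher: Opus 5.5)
Your reduction is sound and matches the skeleton of the paper's proof. You take a face $\sigma$ of size at most $d-k$ maximizing $r(M(\cdot))$, set $W'=\{w : L\subseteq F_w\}$, and observe that any face of $\kk$ containing $\sigma$ together with some $u\in U=W\setminus W'$ gives a contradiction via monotonicity, uniqueness of $M$, and the small-witness lemma. However, producing that face is the only step where the selection-structure hypothesis is used, and you leave it as ``the main obstacle.'' You offer only a KKM/Sperner-type plan with unspecified fibers, so the proposal does not prove the theorem. The sketch also cannot work as written. You ask for a face of $\kk[U]$ extending the witness $\sigma$, but $\sigma\subseteq W'$ is disjoint from $U$. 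The missing move is to use that $\LL$ is upward closed, so $U\cup\sigma\in\LL$ and you may work in $\kk[U\cup\sigma]$.

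With that, the step is a short homology argument. Take fibers $D_w=\{0,1\}$ for $w\in\sigma$ and $D_w=\{0\}$ otherwise. Then $\kk(\sigma;D)$ is the join of $|\sigma|$ copies of $S^0$, the boundary of a cross-polytope, which is a $(|\sigma|-1)$-sphere sitting inside $\kk(U\cup\sigma;D)$. Suppose no face of $\kk[U\cup\sigma]$ properly contained $\sigma$. Then the top simplices of this sphere would be maximal in $\kk(U\cup\sigma;D)$, so its fundamental cycle could not be a boundary, and $\tilde H_{|\sigma|-1}(\kk(U\cup\sigma;D))\neq 0$. Since $|\sigma|-1\le d-k-1$, this contradicts $(d-k-1)$-connectivity. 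Hence some face of $\kk[U\cup\sigma]$ strictly contains $\sigma$, and therefore contains some $u\in U$. That face is automatically a face of $\kk$, because $\kk[U']\subseteq\kk$ for every $U'$. So the second axiom, which you planned to use to ``lift back'' to $\kk$, is not needed, and neither is an abstract reproof of the earlier selection-structure Helly theorem.
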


As an example, consider a set $W = W_1 \sqcup W_2 \sqcup \dots \sqcup W_{d-k+1}$ partitioned into $d-k+1$ sets.  Take $\kk$ to be the family of sets that have at most one element from each $W_i$ and $\LL$ to be the family of sets that have at least one element from each $W_i$.  Then $\Sigma = (\kk, \LL)$ is a $(d-k)$-admissible selection structure.  Moreover, this precise selection structure gives us exactly the setting for \cref{thm:colorful-desantis}.

As an additional example, consider $M$ the independence complex of a matroid with vertex set $W$, with rank greater than or equal to $d-k+1$.  We can consider $\kk$ the $(d-k)$-skeleton of $M$, and $\LL$ be the family of sets of rank greater than or equal to $d-k+1$.  This is a $(d-k)$-admissible selection structure (see \cite{soberon2026kkm-arxiv}).  This gives us the following matroid version, which generalizes the Kalai--Meshulam colorful Helly theorem (applied to families of convex sets).

\begin{corollary}
    Let $W$ be a finite set and $0 \le k \le d$ be integers.  Let $\mathcal{M}$ be a matroid on $W$ with rank function $\rho$.  Assume that for each $w \in W$ we have a closed convex set $F_w \subseteq \rr^d$.  Additionally, assume that for every $\sigma \subseteq W$ that is independent in $\mathcal{M}$ we have $\bigcap_{w \in \sigma}F_w$ contains a $k$-flat.  Then, there exists $W' \subseteq W$ such that $\rho (W \setminus W') \le d-k$ and $\bigcap_{w \in W'}F_w$ contains a $k$-flat.
\end{corollary}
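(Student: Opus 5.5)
The plan is to obtain the corollary by specializing \cref{thm:selection-structure-desantis} to the matroid selection structure described just before the statement. First I would dispose of a degenerate case. If $\rho(W) \le d-k$, take $W' = \emptyset$. Then $\rho(W \setminus W') = \rho(W) \le d-k$. Also $\bigcap_{w \in \emptyset} F_w = \rr^d$, which contains a $k$-flat. So from now on I assume $\rho(W) \ge d-k+1$.

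Next I set up the structure. Let $\kk$ be the $(d-k)$-skeleton of the independence complex of $\mathcal{M}$: the independent sets of size at most $d-k+1$. Let $\LL = \{U \subseteq W : \rho(U) \ge d-k+1\}$. Since $\rho$ is monotone, $\LL$ is upward closed. It is non-empty because $W \in \LL$.

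I would then check the two admissibility conditions for $\Sigma = (\kk,\LL)$.
\begin{itemize}
\item \emph{Facet condition.} For $U \in \LL$, a facet of $\kk[U]$ is a maximal independent subset of $U$ truncated at size $d-k+1$. Since $\rho(U) \ge d-k+1$, it is an independent set of size exactly $d-k+1$, hence a facet of $\kk$.
\item \emph{Connectivity condition.} We need every parallel expansion $\kk(U;D)$ with $U \in \LL$ to be $(d-k-1)$-connected. This is the genuinely topological input and the main obstacle. I would not reprove it but invoke \cite{soberon2026kkm-arxiv}, where this matroid structure is shown to be a $(d-k)$-admissible selection structure, as stated in the paragraph preceding the corollary.
\end{itemize}
If one wanted a self-contained argument, the route would be to note that replacing each vertex by its fiber $D_w$ turns the truncated matroid complex restricted to $U$ into the truncated independence complex of another matroid, namely the parallel extension. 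Then use that a matroid complex of rank $r$ is shellable, hence $(r-2)$-connected. Here the truncation has rank $d-k+1$ because $\rho(U) \ge d-k+1$, which gives $(d-k-1)$-connectivity.

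Finally I apply the theorem. Every $\sigma \in \kk$ is independent in $\mathcal{M}$, so by hypothesis $\bigcap_{w \in \sigma} F_w$ contains a $k$-flat. \Cref{thm:selection-structure-desantis} then yields $W' \subseteq W$ with $W \setminus W' \notin \LL$ such that $\bigcap_{w \in W'} F_w$ contains a $k$-flat. By the definition of $\LL$, the condition $W \setminus W' \notin \LL$ means exactly $\rho(W \setminus W') \le d-k$, which is the conclusion.
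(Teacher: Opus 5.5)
Your proposal is correct and follows the paper's own route. Like the paper, you apply \cref{thm:selection-structure-desantis} to the selection structure with $\kk$ the $(d-k)$-skeleton of the independence complex and $\LL=\{U:\rho(U)\ge d-k+1\}$, with admissibility cited from \cite{soberon2026kkm-arxiv}. Your separate handling of the case $\rho(W)\le d-k$ (taking $W'=\emptyset$) covers the rank assumption the paper leaves implicit, and your sketch of the connectivity via parallel extensions and shellability of matroid complexes is sound.
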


To prove \cref{thm:selection-structure-desantis}, we also use the minimizing argument from \cref{sec:order-of-k-flats}.

\begin{proof}[Proof of \cref{thm:selection-structure-desantis}]
    To simplify the notation, for every $\sigma \in \kk$, let $M(\sigma) = M(\bigcap_{w \in \sigma}F_w)$.  By the condition of the theorem, for every $\sigma \in \kk$, the dimension of $M(\sigma)$ is at least $k$.

    Let $B$ be the facet of $\kk$ for which $L=M(B)$ is maximal in the order introduced in \cref{sec:order-of-k-flats}.  By \cref{lem:minimize-small-family}, there exists $\sigma \subseteq B$ such that $|\sigma| \le d - \dim (L) \le d-k$ and $M(\sigma) = M(B)$.

    Now let $\tau = \{w \in W: L \subseteq F_w\}$.  If $W \setminus \tau \not\in \LL$, we can take $W' = \tau$ and we are done.  Assume for a contradiction that $W \setminus \tau \in \LL$.  Since $\LL$ is upward-closed, we have $U = W \setminus \tau \cup \sigma \in \LL$.  Note that in $U$, the elements of $\sigma$ are the only sets that correspond to convex sets that contain $L$.

    Moreover, no face of $\kk[U]$ properly contains $\sigma$.  If there was such a face $\sigma'$, by monotonicity we would have $r(M(\sigma')) \ge r(M(\sigma))=r(L)$.  Since only the sets corresponding to $\sigma$ contain $L$, we would have $r(M(\sigma')) > r(M(\sigma))$, which would contradict the choice of $L$.

    Now consider the family of sets $D=\{D_w : w \in W\}$ defined by
    \[
    D_w = \begin{cases}
        \{0,1\} & \mbox{if }w \in \sigma, \\
        \{0\} & \mbox{otherwise.}
    \end{cases}
    \]
    Now consider the space $\kk(U;D)$.  The elements of $\sigma$ correspond to a $(|\sigma|-1)$-sphere in the parallel expansion (to put it precisely, $\kk(\sigma;D)$ is a $(|\sigma|-1)$-dimensional sphere).  Since there is no face that properly contains $\sigma$ in $U$, we have that the $(|\sigma|-1)$-st reduced homology of $\kk(U;D)$ is non-trivial, which contradicts the connectedness condition.  Therefore, the conclusion of the theorem must hold.
\end{proof}

\section{Remarks}\label{sec:remarks}

The optimal value for $\beta$ in the fractional Helly theorem is known \cites{Eckhoff1985, Kalai1986}.  However, it is not clear whether the natural version of those bounds (replacing every instance of $d$ by $d-k$) would apply for \cref{thm:frational-desantis}.

To prove those bounds, it would be sufficient to show the following.  Let $\ff$ be a finite family of convex sets in $\rr^d$, each containing a flat of dimension at least $k$.  We can form a simplicial complex $N_k(\ff)$ as follows.  Each set in $\ff$ corresponds to a vertex of $N_k(\ff)$, and a set of vertices makes a face if and only if the intersection of its corresponding sets contains a $k$-flat.  The complex $N_0(\ff)$ is the nerve complex of the family, and is known to be $d$-collapsible \cite{Wegner1975}.  

If we could show that $N_k(\ff)$ is $(d-k)$-collapsible, then we would be able to apply directly the results of Kalai to obtain much better bounds for $\beta$ in \cref{thm:frational-desantis}, and \cref{thm:selection-structure-desantis} would be a direct consequence of \cite{soberon2026kkm-arxiv}*{Thm 12}.

The authors do not know if $N_k(\ff)$ is $(d-k)$-collapsible, and the standard swiping arguments used to show the collapsibility of $N_0(\ff)$ seem to fail.  We consider it interesting that it is still possible to prove the fractional and selection-structure results despite the lack of bounds on the collapsibility of this simplicial complex. 

% \bib, bibdiv, biblist are defined by the amsrefs package.

\end{document}